\newif\ifarxiv
\newif\iftac
\newcommand{\arxivonly}[1]{\ifarxiv#1\fi}
\newcommand{\taconly}[1]{\iftac#1\fi}
\newcommand{\arxivortac}[2]{\iftac#2\else#1\fi}
  \let\citep\cite
  \definecolor{darkgreen}{rgb}{0,0.45,0}
  \definecolor{darkred}{rgb}{0.75,0,0}
  \definecolor{darkblue}{rgb}{0,0,0.6}
\newbox\pbbox
\def\pb{\save[]+<3.8mm,-3.8mm>*{\copy\pbbox} \restore}
\theoremstyle{plain}
\newtheorem{theorem}{Theorem}
\newtheorem{lemma}[theorem]{Lemma}
\newtheorem{corollary}[theorem]{Corollary}
\renewcommand{\to}{\rightarrow} 
\newcommand{\into}{\hookrightarrow} 
\renewcommand{\equiv}{\simeq} 
\newcommand{\bbN}{\mathbb{N}}
\newcommand{\hattimes}{\mathbin{\widehat{\times}}}
\newcommand{\orth}{\mathrel{\pitchfork}}
\newcommand{\Id}{\mathtt{Id}}
\newcommand{\isProp}{\mathtt{isProp}\,}
\newcommand{\Prop}{\mathtt{Prop}}
\newcommand{\type}{\mathtt{Type}}
\newcommand{\El}{\mathtt{El}}
\newcommand{\U}{\mathtt{U}}
\begin{document}
\title{The law of excluded middle in the \\ simplicial model of type theory}
\author{Chris Kapulkin \arxivortac{\and}{and} Peter LeFanu Lumsdaine}
\iftac
  \copyrightyear{2020}
  \keywords{Univalent foundations, homotopy type theory, simplicial sets, law of excluded middle, dependent type theory}
  \amsclass{03B15 (primary), 55U10, 18C50}
  \eaddress{p.l.lumsdaine@math.su.se\cr kkapulki@uwo.ca}
\else
\date{September 2, 2020}
\fi

\taconly{\maketitle}

\begin{abstract}
  We show that the law of excluded middle holds in Voevodsky’s simplicial model of type theory.
  As a corollary, excluded middle is compatible with univalence.
\end{abstract}

\arxivonly{\maketitle}

\taconly{\medskip}

Since \citep{kapulkin-lumsdaine:simplicial-model} first appeared in 2012, various readers have wondered whether Voevodsky’s model of type theory in simplicial sets validates the law of excluded middle.
This fact is by now folklore within the field (implicitly appealed to in \citep[\textsection 3.4]{hott:book}, for instance, for the relative consistency of LEM); but since it has still not appeared in the literature, we set it down here for the record.

We assume \citep{kapulkin-lumsdaine:simplicial-model} as background throughout, and follow its notational conventions, with a few shorthands for readability: we omit Scott brackets, write $\Gamma \models A\ \type$ to mean that $A$ is a type of the simplicial model (i.e.,\ a Kan fibration $p_A : \Gamma.A \to \Gamma$), and write $\Gamma \models A$ to mean that $p_A$ admits a section, i.e., $A$ is inhabited.

As required for constructing the simplicial model as in \citep[Cor.~2.3.5]{kapulkin-lumsdaine:simplicial-model}, we assume throughout an inaccessible cardinal $\alpha$, and later another $\beta < \alpha$ to give a universe $U_\beta$ in the model.

For $\Gamma \models A \ \type$, define
$\isProp A \coloneqq \prod_{x, y : A} \Id_A(x,y)$.
Our main goal is:

\begin{theorem}[Schema of Excluded Middle] \label{thm:lem}
  Let $\Gamma \models A \ \type$, and suppose $\Gamma \models \isProp A$.
  Then $\Gamma \models A + \neg A$.
\end{theorem}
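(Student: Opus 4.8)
The plan is to argue semantically: I would analyse the fibers of the Kan fibration $p_{A+\neg A}\colon \Gamma.(A+\neg A)\to\Gamma$ interpreting $A+\neg A$, show that every one of them is contractible, and conclude that $p_{A+\neg A}$ is a trivial fibration and hence admits a section --- which is exactly a witness for $\Gamma\models A+\neg A$.

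The one step with genuine content is a local analysis of $\isProp$. I claim that if $\Gamma\models\isProp A$ then, for every vertex $\gamma\in\Gamma_0$, the fiber $A_\gamma$ of $p_A$ over $\gamma$ is either empty or (weakly) contractible. Since the interpretations of $\Id$-types, $\Pi$-types, $\isProp$, and of inhabitation are all stable under substitution, pulling back along $\gamma\colon\Delta^0\to\Gamma$ reduces the claim to the case $\Gamma=\Delta^0$, where $A$ is simply a Kan complex $X$ with $\Delta^0\models\isProp X$. Unwinding the interpretation (and applying the $\Pi$-adjunction), such an inhabitant amounts to a section $h$ of the path object $PX\to X\times X$ chosen to interpret $\Id_X$. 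If $X\neq\emptyset$, fix a vertex $a\in X_0$; then $h\circ\langle\mathrm{const}_a,\mathrm{id}_X\rangle\colon X\to PX$ is a path-object homotopy from the constant map $\mathrm{const}_a$ to $\mathrm{id}_X$, so $\mathrm{id}_X$ is homotopic to a constant map and $X$ is contractible.

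Granting this, I would finish by computing the remaining fibers. Since the empty type, $\Pi$-types, and binary coproduct types are all interpreted fiberwise and stably under substitution, the fiber of $\neg A$ (that is, of $A\to\mathbf 0$, with $\mathbf 0$ the empty type) over $\gamma$ is the simplicial mapping object $\underline{\mathrm{Hom}}(A_\gamma,\emptyset)$, which is $\emptyset$ when $A_\gamma\neq\emptyset$ and $\Delta^0$ when $A_\gamma=\emptyset$. Hence the fiber of $p_{A+\neg A}$ over $\gamma$ is $A_\gamma\sqcup(\neg A)_\gamma$, which is $A_\gamma$ --- contractible, by the previous step --- in the first case and $\Delta^0$ in the second; either way it is contractible. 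Now I invoke the standard fact that a Kan fibration all of whose fibers over vertices are contractible is a trivial (acyclic) fibration, together with the fact that a trivial fibration has the right lifting property against the monomorphism $\emptyset\hookrightarrow\Gamma$; the resulting diagonal filler is the desired section of $p_{A+\neg A}$.

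I do not expect a deep obstacle: the homotopy theory is off-the-shelf and the interpretations of the type formers are the obvious fiberwise ones. The care required is concentrated in the middle step --- correctly identifying the syntactic $\isProp A$ with its semantic incarnation as a sectioned path object over $X\times X$, and checking that the dichotomy "$A_\gamma$ empty'' versus "$A_\gamma$ contractible'' really does exhaust the cases in the fiber computation for $A+\neg A$.
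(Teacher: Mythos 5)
Your argument is correct, but it reaches the conclusion by a genuinely different route than the paper. The paper stays global: from $\Gamma \models \isProp A$ it deduces that the Leibniz exponential $i_1 \triangleright p_A$ is a trivial fibration, hence (by an elementary prism-combinatorics lemma) that $i_n \orth p_A$ for all $n \geq 1$; it then splits the base as $\Gamma = \Gamma_0 + \Gamma_1$ with $\Gamma_0$ the image of $p_A$ --- a complemented simplicial subset, since membership in the image of a Kan fibration is detected on vertices --- and assembles the section of $p_{A + \neg A}$ from a section of $p_A$ over $\Gamma_0$ (where $p_A$ is additionally $i_0$-orthogonal, hence a trivial fibration) and the canonical section of $p_{\neg A}$ over $\Gamma_1$. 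You instead work fiberwise over vertices: each fiber of $p_A$ is empty or contractible, the fibers of $p_{\neg A}$ and of $p_{A+\neg A}$ are computed accordingly, and one appeal to the recognition theorem ``a Kan fibration whose fibers over all vertices are contractible is a trivial fibration'' yields the section of $p_{A+\neg A}$ in a single stroke. Both arguments turn on the same classical case split (is the fiber of $p_A$ over a given vertex empty or not?), which is where excluded middle in the metatheory enters; the paper makes this visible as the complemented-image decomposition of $\Gamma$, whereas in your version it is absorbed into the fiber computation. What your route buys is a shorter, one-shot construction of the section; what it costs is reliance on the fiberwise acyclicity criterion for Kan fibrations, a substantially deeper black box (it rests on minimal-fibration theory) than the prism combinatorics the paper invokes. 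Your local analysis of $\isProp$ --- pull back to a point, read off a section of the path object $PX \to X \times X$, and contract $X$ when it is inhabited --- is a sound substitute for the paper's identification of $\Gamma \models \isProp A$ with triviality of $i_1 \triangleright p_A$; indeed the latter is just your fiberwise statement made uniform over $\Gamma$, and the substitution-stability of $\Id$, $\Pi$, and inhabitation that you invoke to localize is exactly what the coherence construction of the simplicial model guarantees.
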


We write $i_n \colon \partial \Delta^n \into \Delta^n$ for the boundary inclusion of the standard $n$-simplex, and $f \orth g$ to indicate that $f$ has the left lifting property with respect to $g$.

\begin{lemma} \label{lem:orthogonality-combinatorics}
  The following are equivalent for a Kan fibration $p$:
  \begin{enumerate}
    \item $i_1 \hattimes i_n \orth p$ for all $n \geq 0$;
    \item $i_n \orth p$ for all $n \geq 1$.
  \end{enumerate}
\end{lemma}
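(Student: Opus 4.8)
The plan is to establish the two implications separately. The direction $(2)\Rightarrow(1)$ is elementary simplicial combinatorics and uses nothing about $p$; the direction $(1)\Rightarrow(2)$ is the substantive one, and I will route it through the auxiliary condition $(\star)$: every fibre of $p$ is either empty or weakly contractible. So the overall shape is $(1)\Rightarrow(\star)\Rightarrow(2)$, together with $(2)\Rightarrow(1)$. I expect the main obstacle to be $(1)\Rightarrow(\star)$, where one has to produce lifting problems against $i_1\hattimes i_k$ whose solutions are homotopies that are \emph{stationary on the boundary} of $\Delta^k$.

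For $(2)\Rightarrow(1)$: the domain of $i_1\hattimes i_n$ is the subobject $(\partial\Delta^1\times\Delta^n)\cup(\Delta^1\times\partial\Delta^n)$ of $\Delta^1\times\Delta^n$, and since every vertex of $\Delta^1$ lies in $\partial\Delta^1$ this subobject already contains every vertex of $\Delta^1\times\Delta^n$. Hence $i_1\hattimes i_n$ is a monomorphism that is bijective on $0$-simplices, and attaching to its domain the non-degenerate simplices of $\Delta^1\times\Delta^n$ not yet present, in order of increasing dimension, exhibits it as a finite composite of pushouts of the boundary inclusions $i_m$ with $m\ge1$ -- the restriction $m\ge1$ being exactly the statement that no new vertices need be attached. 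Consequently $i_m\orth p$ for all $m\ge1$ implies $i_1\hattimes i_n\orth p$ for all $n\ge0$. (In particular $i_1\hattimes i_0\iso i_1$, so the case $n=0$ is automatic.)

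For $(1)\Rightarrow(\star)$: the fibres of a Kan fibration are Kan complexes, so it is enough to show that a non-empty fibre $F=p^{-1}(c)$ over a vertex $c$ of $Y$ is connected with vanishing higher homotopy groups. Fix $x_0\in F$. For connectedness, given $x_1\in F$ apply $i_1\hattimes i_0\iso i_1\orth p$ to the square whose top edge $\partial\Delta^1\to X$ picks out $x_0$ and $x_1$ and whose bottom edge is constant at $c$; the lift is a path from $x_0$ to $x_1$ inside $F$. For $k\ge1$ and a based map $f\colon(\Delta^k,\partial\Delta^k)\to(F,x_0)$, apply $i_1\hattimes i_k\orth p$ to the square whose bottom edge $\Delta^1\times\Delta^k\to\Delta^0\xrightarrow{c}Y$ is constant at $c$ and whose top edge $(\partial\Delta^1\times\Delta^k)\cup(\Delta^1\times\partial\Delta^k)\to X$ is $f$ on $\{0\}\times\Delta^k$ and constant at $x_0$ on both $\{1\}\times\Delta^k$ and $\Delta^1\times\partial\Delta^k$; these pieces agree on overlaps precisely because $f$ is based. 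A lift $H$ exists by $(1)$; since $pH$ is constant at $c$ it factors through $F$, and by construction $H$ is a homotopy in $F$ from $f$ to the constant map at $x_0$ that is stationary on $\partial\Delta^k$. Hence $[f]=0$ in $\pi_k(F,x_0)$, so $F$ is weakly contractible.

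For $(\star)\Rightarrow(2)$ I will prove the stronger claim that a Kan fibration $p$ satisfying $(\star)$ has the right lifting property against every monomorphism $j\colon S\into T$ with $S$ non-empty and $T$ connected; taking $j=i_n$ ($n\ge1$) then gives $(2)$ (and $j=i_1\hattimes i_n$ would reprove $(\star)\Rightarrow(1)$). Given a lifting problem against such a $j$, the map $T\to Y$ factors through a single connected component $C$ of $Y$ since $T$ is connected, and $p^{-1}(C)\ne\emptyset$ since $S\ne\emptyset$ maps into $X$ over $C$. Over $C$ every fibre of $p$ is non-empty -- lift a path in $C$ along the Kan fibration $p$, starting from the one fibre known to be inhabited -- and weakly contractible by $(\star)$; hence $p^{-1}(C)\to C$ is a Kan fibration with weakly contractible fibres, so a weak equivalence, so a trivial fibration, and therefore lifts against the cofibration $j$. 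Composing such a lift $T\to p^{-1}(C)$ with the inclusion $p^{-1}(C)\into X$ solves the original lifting problem.
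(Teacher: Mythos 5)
Your argument is correct, but it takes a genuinely different route from the one the paper gestures at, which is a direct combinatorial manipulation of prisms in the style of Joyal--Tierney. Your direction $(2)\Rightarrow(1)$ is the elementary half and matches what anyone would do: since the domain of $i_1 \hattimes i_n$ already contains every vertex of $\Delta^1 \times \Delta^n$, the skeletal filtration exhibits it as a composite of pushouts of $i_m$ with $m \geq 1$ only. The substantive difference is in $(1)\Rightarrow(2)$: rather than working with horns and prisms directly, you reformulate $(1)$ as the homotopy-theoretic condition $(\star)$ that every fibre of $p$ is empty or weakly contractible, and then invoke the theorem that a Kan fibration with non-empty weakly contractible fibres is a trivial fibration. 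Your reduction to $(\star)$ is carried out carefully and correctly (the lifting problems against $i_1 \hattimes i_k$ with constant base do produce null-homotopies rel $\partial\Delta^k$, and the compatibility check on $\partial\Delta^1 \times \partial\Delta^k$ is exactly where basedness of $f$ is used). What this buys is conceptual clarity: it makes explicit that condition $(1)$ says precisely that $p$ is $(-1)$-truncated, which is how the lemma actually functions in the proof of Theorem~\ref{thm:lem}. The cost is that the final step --- ``Kan fibration, all fibres over vertices non-empty and contractible, hence trivial fibration'' --- is true but rests on machinery well beyond prism combinatorics: either minimal fibration theory, or the long exact sequence of a Kan fibration plus the model-structure fact that an acyclic Kan fibration has the right lifting property against all monomorphisms. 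That is a legitimate citation at the level of rigour of this paper, but you should name it as such rather than fold it into ``so a weak equivalence, so a trivial fibration.'' Two small points to make explicit if you write this up: a path between two vertices of a component of a general (non-Kan) base is a zig-zag of edges, so the non-emptiness of all fibres over $C$ uses lifts against both horn inclusions of $\Delta^1$; and your $(\star)$ should be stated as a condition on fibres over vertices only, since that is what you prove and all that the trivial-fibration criterion requires.
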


\begin{proof}
  Standard combinatorics of prisms, similar to~\citep[proof of Thm.~1.5.3]{joyal-tierney:simplicial-homotopy}.
\end{proof}

\begin{lemma} \label{lem:break-fibration}
  Given a Kan fibration $p \colon Y \to X$, the image of $p$ is complemented: that is, the sets $\{X_n \setminus p(Y_n)\}_{n \in \bbN}$ form a simplicial set $X \setminus p(X) \subseteq X$.
\end{lemma}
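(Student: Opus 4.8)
The plan is to verify directly that the levelwise complement, with $n$-simplices $X_n \setminus p(Y_n)$, is closed under the structure maps of $X$, and hence forms a sub-simplicial-set. Since every morphism of $\Delta$ is a composite of coface operators $\delta^i$ and codegeneracy operators $\sigma^j$, it is enough to check closure under the face maps $d_i$ and degeneracy maps $s_j$ of $X$; equivalently, writing $p(Y) \subseteq X$ for the levelwise image (which is always a sub-simplicial-set), it suffices to show for every $x$ in $X$ that: (i) if $d_i x \in p(Y)$ then $x \in p(Y)$; and (ii) if $s_j x \in p(Y)$ then $x \in p(Y)$.

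Part (ii) is purely formal and uses nothing about $p$ beyond functoriality. The codegeneracy $\sigma^j$ has a section $\delta^j$ in $\Delta$, so that $d_j s_j = \mathrm{id}$; hence if $s_j x = p(w)$ then $x = d_j s_j x = d_j\, p(w) = p(d_j w) \in p(Y)$. (The same argument shows the complement of the image of \emph{any} simplicial map is closed under $\alpha^*$ for every split epimorphism $\alpha$ of $\Delta$, in particular under all degeneracies.)

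Part (i) is where the Kan fibration hypothesis is used. Suppose $x \in X_n$ with $d_i x = p(z)$, $z \in Y_{n-1}$. Viewing $x$ as a map $\Delta^n \to X$ and $z$ as a map $\Delta^{n-1} \to Y$, we have $p \circ z = x \circ \delta^i$, i.e.\ a commuting square whose left edge is the face inclusion $\delta^i \colon \Delta^{n-1} \into \Delta^n$ and whose right edge is $p$; any diagonal filler $y \colon \Delta^n \to Y$ then satisfies $p \circ y = x$, whence $x = p(y) \in p(Y_n)$. So (i) amounts to the lifting statement $\delta^i \orth p$, and this holds provided $\delta^i$ is anodyne. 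The one genuinely non-formal ingredient is therefore that single face inclusions $\delta^i \colon \Delta^{n-1} \into \Delta^n$ --- not merely the horn inclusions --- are anodyne; I would justify this by the standard observation that $\delta^i$ is a monomorphism and a weak homotopy equivalence ($\Delta^{n-1}$ and $\Delta^n$ both being contractible), hence a trivial cofibration, i.e.\ anodyne, in the Kan--Quillen model structure on simplicial sets; alternatively one gives a direct combinatorial argument exhibiting $\delta^i$ as a finite composite of pushouts of horn inclusions, i.e.\ that $\Delta^n$ collapses onto its $i$-th face. Granting this, together with the reductions above, the lemma follows. (Some hypothesis on $p$ is genuinely needed here: for instance the inclusion $\partial\Delta^1 \into \Delta^1$, which is not a Kan fibration, has non-complemented image, since the non-degenerate edge of $\Delta^1$ lies in the levelwise complement of the image but neither of its endpoints does.)
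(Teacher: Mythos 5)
Your proof is correct, and it identifies exactly the right non-formal ingredient, but it is organized differently from the paper's. The paper's proof rests on a single characterization: an $n$-simplex $x$ lies in $p(Y_n)$ if and only if some (equivalently, every) vertex of $x$ lies in $p(Y_0)$ --- the ``only if'' by functoriality, the ``if'' by lifting against the anodyne vertex inclusion $\Delta^0 \into \Delta^n$. Closure of the complement under all structure maps is then immediate, since the vertices of any face or degeneracy of $x$ are among the vertices of $x$. You instead verify the two contrapositive closure conditions directly: degeneracies by the retraction $d_j s_j = \mathrm{id}$ (purely formal, as you note), and faces by lifting against the face inclusion $\delta^i \colon \Delta^{n-1} \into \Delta^n$, justified as a trivial cofibration in the Kan--Quillen model structure. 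Both arguments ultimately invoke the same fact --- Kan fibrations lift against anodyne monomorphisms between standard simplices --- so the mathematical content is the same; what the paper's version buys is a sharper statement (the image and its complement are each determined by, and saturated over, a set of vertices, so the decomposition of $X$ is really a $0$-dimensional phenomenon), while yours is a more routine levelwise verification that additionally makes explicit where the hypothesis is needed, via your $\partial\Delta^1 \into \Delta^1$ counterexample.
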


\begin{proof}
  For any $n$-simplex $x \in X_n$, note that $x \in X_n \setminus p(Y_n)$ exactly when all vertices of $x$ lie in $X_0 \setminus p(Y_0)$. The claim follows directly.
\end{proof}

\begin{proof}[\arxivonly{Proof }of Theorem~\ref{thm:lem}]
  Suppose $\Gamma \models \isProp A$.
  Unwinding the interpretation of $\isProp$ in the simplicial model, this says that the two projections $\pi_1, \pi_2 \colon \Gamma.A.A \to \Gamma. A$ are homotopic over $\Gamma$; equivalently, the fibration $p_{\Id_A} \colon \Gamma. A . A. \Id_A \to \Gamma. A. A$ is trivial.
  But $p_{\Id_A}$ is a pullback of the Leibniz exponential $i_1 \triangleright p_A$ along a weak equivalence, so the latter is also trivial:
  \[ \xymatrix@C=1em{
      \Gamma.A.A.\Id_A \ar[r]^-{\sim}  \ar[d]_{p_{\Id_A}} \pb & (\Gamma.A)^{\Delta^1} \ar[d]^{i_1 \triangleright p_A}
      \\ \Gamma.A.A \ar[r]^-{\sim} \ar[d] \pb & (\Gamma.A)^2 \times_{\Gamma^2} \Gamma^{\Delta^1} \ar[d]
      \\ \Gamma \ar[r]^-{\sim} & \Gamma^{\Delta^1}
    } \]
  This is in turn equivalent to $i_1 \hattimes i_n \orth p_A$ for all $n$; so by Lemma~\ref{lem:orthogonality-combinatorics}, $i_n \orth p_A$ for all $n \geq 1$.

  Now to give a section of $p_{A + \neg A}$, we decompose $\Gamma$ according to Lemma~\ref{lem:break-fibration} as $\Gamma = \Gamma_0 + \Gamma_1$, where $\Gamma_0 = p_A (\Gamma.A)$ and $\Gamma_1 = \Gamma \setminus \Gamma_0$, and work over each component separately.
  The pullback of $p_A$ to $\Gamma_0$ is orthogonal to $i_0$ by definition of $\Gamma_0$, and higher $i_n$ since $p_A$ was; so it is a trivial fibration, so admits a section.
  Over $\Gamma_1$, the pullback of $p_A$ is empty, so we have a section of $p_{\neg A}$.
  Together they give the desired section $\Gamma \to \Gamma.A + \neg A$ of $p_{A + \neg A}$.
\end{proof}

Theorem~\ref{thm:lem} gave the law of excluded middle in the form of a global scheme.
This immediately implies other forms of LEM, e.g.\ quantified over an universe as in \citep[(3.4.1)]{hott:book}.
Let $\U_\beta$ be a universe in the model, and define $\Prop_\beta \coloneqq \sum_{A : \U_\beta} \isProp A$.

\begin{corollary} \label{cor:lem-in-universe}
  The universe $\U_\beta$ satisfies LEM: that is,
  \[ \textstyle \models \prod_{A : \Prop_{\beta}} \left( \El(\pi_1(A)) + \neg \El(\pi_1(A))\right). \]
\end{corollary}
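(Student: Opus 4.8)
The plan is to reduce the statement to Theorem~\ref{thm:lem} by passing to the generic element of $\Prop_\beta$. Since the type displayed in the corollary is closed, the assertion $\models \prod_{A : \Prop_\beta}\bigl(\El(\pi_1(A)) + \neg\El(\pi_1(A))\bigr)$ means exactly that the fibration over the empty context interpreting this $\Pi$-type admits a section. Under the interpretation of $\Pi$-types in the simplicial model --- that is, the right adjoint to pullback along the projection $\Prop_\beta \to \mathbf{1}$, as in \citep{kapulkin-lumsdaine:simplicial-model} --- such a section corresponds to a section of the fibration interpreting $\El(\pi_1(A)) + \neg\El(\pi_1(A))$ over the context $\Prop_\beta$ itself. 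So it suffices to prove
\[ \Prop_\beta \models \El(\pi_1(A)) + \neg\El(\pi_1(A)). \]

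For this, I would first observe that $\Prop_\beta = \sum_{A : \U_\beta}\isProp A$ is a legitimate context of the model: $\U_\beta$ is a type (this is where the second inaccessible $\beta < \alpha$ is used), and over the context $(A : \U_\beta)$ the type $\isProp(\El A)$ is built from the $\Pi$- and $\Id$-type structure. Now work in the context $\Gamma := \Prop_\beta$ and put $P := \El(\pi_1(A))$, a type over $\Gamma$. The second $\Sigma$-projection supplies a term witnessing $\Gamma \models \isProp P$. Theorem~\ref{thm:lem}, applied with this $\Gamma$ and with $P$ in place of $A$, then yields $\Gamma \models P + \neg P$, which is precisely the displayed judgement; transporting back along the $\Pi$-adjunction completes the argument.

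There is no serious obstacle here: the mathematical content is entirely contained in Theorem~\ref{thm:lem}, and what remains is bookkeeping. The two points requiring a little care are (i) that $\El$ and $\U_\beta$ behave as in the construction of \citep{kapulkin-lumsdaine:simplicial-model}, so that $\El(\pi_1(A))$ genuinely is a type over $\Prop_\beta$ and the second $\Sigma$-projection genuinely inhabits $\isProp$ of it; and (ii) tracking the adjunction $\Sigma_{\Prop_\beta} \dashv \Prop_\beta^{*} \dashv \textstyle\prod_{\Prop_\beta}$ used to build the model, so that ``a section of $\prod_{A : \Prop_\beta} B$ over the empty context'' and ``a section of $B$ over $\Prop_\beta$'' really do match up. Both ingredients are already part of the machinery of the simplicial model, so the deduction is essentially formal.
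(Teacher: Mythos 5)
Your proposal is correct and follows exactly the route the paper takes: its entire proof is ``apply Theorem~\ref{thm:lem} to the type $A : \Prop_\beta \models \El(\pi_1(A))\ \type$,'' and your additional remarks about the $\Pi$-adjunction over the empty context and the second $\Sigma$-projection witnessing $\isProp$ are just the bookkeeping the paper leaves implicit. Nothing further is needed.
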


\begin{proof}
 Apply Theorem~\ref{thm:lem} to the type ${A : \Prop_{\beta}} \models \El(\pi_1(A))\ \type$. 
\end{proof}

\begin{corollary}
  It is consistent, over Martin-Löf Type Theory with $\Pi$-, $\Sigma$-, $\Id$-, $1$-, $0$-, and $+$-types (as set out in \citep[App.~A, B]{kapulkin-lumsdaine:simplicial-model}), for a universe to simultaneously satisfy the univalence axiom, the law of excluded middle, and closure under all the listed type formers. 
\end{corollary}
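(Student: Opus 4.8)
The plan is to observe that a single model---Voevodsky's simplicial model with its univalent universe $\U_\beta$---already witnesses all the stated properties at once, so that the consistency claim follows by soundness of the interpretation.

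First I would recall from \citep{kapulkin-lumsdaine:simplicial-model} that, for inaccessible $\beta < \alpha$, the simplicial model carries a universe $\U_\beta$ that is fibrant, classifies the $\beta$-small Kan fibrations, is closed (with strict, chosen structure) under $\Pi$-, $\Sigma$-, $\Id$-, $1$-, $0$-, and $+$-types, and satisfies the univalence axiom; this is the main content of that paper and needs no reproof here. Next, Corollary~\ref{cor:lem-in-universe} shows that this same $\U_\beta$ satisfies LEM in the form $\models \prod_{A : \Prop_\beta}\bigl(\El(\pi_1(A)) + \neg\,\El(\pi_1(A))\bigr)$. Hence every rule and axiom of the theory at issue---MLTT with the listed type formers, the closure rules for $\U_\beta$, the univalence axiom for $\U_\beta$, and LEM for $\U_\beta$---is validated by the simplicial model.

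Finally I would invoke soundness together with nontriviality of the model: a type in the empty context is interpreted as a Kan fibration over the terminal simplicial set, a closed term of it as a section of that fibration, and the type $0$ is interpreted as the initial simplicial set $\emptyset$, which receives no map from the terminal object; so there is no closed term of type $0$ and the theory is consistent. The only thing to check is bookkeeping---that the universe appearing in Corollary~\ref{cor:lem-in-universe} is literally the same $\U_\beta$ for which \citep{kapulkin-lumsdaine:simplicial-model} established univalence and closure, which it is, since Theorem~\ref{thm:lem} imposed no constraint on the choice of $\beta$. I do not anticipate a genuine obstacle: because LEM is here a theorem \emph{about} the model rather than an extra hypothesis demanding a modified construction, there is nothing to reconcile among the three properties.
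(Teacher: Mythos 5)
Your proposal is correct and matches the paper's proof, which simply cites Corollary~\ref{cor:lem-in-universe} together with \citep[Cor.~2.3.5]{kapulkin-lumsdaine:simplicial-model}; you have merely spelled out the soundness and nontriviality reasoning that the paper leaves implicit. No substantive difference in approach.
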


\begin{proof}
  By Corollary~\ref{cor:lem-in-universe} together with \citep[Cor.~2.3.5]{kapulkin-lumsdaine:simplicial-model}.
\end{proof}

\begin{corollary}
  In each simplicial universe $\beta$, the type of propositions is equivalent to a discrete simplicial set with 2 elements, i.e., $\Prop_\beta \equiv 1 + 1$.
\end{corollary}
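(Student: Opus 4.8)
The plan is to argue internally in the model, combining the law of excluded middle for $\U_\beta$ (Corollary~\ref{cor:lem-in-universe}) with the univalence of $\U_\beta$ (\citep[Cor.~2.3.5]{kapulkin-lumsdaine:simplicial-model}), and at the end to note that $1+1$ is interpreted by the discrete simplicial set on two elements, so that the internally constructed equivalence is precisely the one asserted.

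First I would extract the two consequences of univalence that are needed. One is that $\isProp A$ is itself a proposition for every $A : \U_\beta$ (a consequence of function extensionality, hence of univalence), so that $\pi_1 \colon \Prop_\beta \to \U_\beta$ is an embedding and a path between $P, Q : \Prop_\beta$ amounts to a path $\pi_1 P = \pi_1 Q$ of underlying codes. The other is propositional extensionality: a biimplication $\El(\pi_1 P) \leftrightarrow \El(\pi_1 Q)$ induces a path $P = Q$, via the usual chain — a biimplication of propositions is an equivalence, univalence converts this to a path $\pi_1 P = \pi_1 Q$, and the $\isProp$-components then agree automatically. Write $\bot, \top : \Prop_\beta$ for the codes of $0$ and $1$ (which $\U_\beta$ contains, being closed under the basic type formers) together with their evident proofs of $\isProp$, and let $f \colon 1+1 \to \Prop_\beta$ send the two points to $\bot$ and $\top$ respectively.

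Next I would build a quasi-inverse $g \colon \Prop_\beta \to 1+1$: given $P$, feed it to the LEM term of Corollary~\ref{cor:lem-in-universe} to obtain an element of $\El(\pi_1 P) + \neg\El(\pi_1 P)$, and case on it, sending the left summand to $\mathrm{inr}(\ast)$ and the right summand to $\mathrm{inl}(\ast)$. For $g \circ f \sim \mathrm{id}_{1+1}$ I check the two points: at $\bot$ the right branch already returns $\mathrm{inl}(\ast)$, while in the vacuous left branch we have an element of $0$ and finish by $0$-elimination; at $\top$ symmetrically, now using that $\ast : 1$ contradicts the right branch. For $f \circ g \sim \mathrm{id}_{\Prop_\beta}$, fix $P$ and case on the same LEM witness: in the left branch $\El(\pi_1 P)$ holds, so $\El(\pi_1 P) \leftrightarrow \El(\pi_1 \top)$ and hence $P = \top = f(g(P))$ by propositional extensionality; in the right branch $\El(\pi_1 P)$ is empty, so $\El(\pi_1 P) \leftrightarrow \El(\pi_1 \bot)$ and hence $P = \bot = f(g(P))$. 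This exhibits $f$ as an equivalence. Finally, since $1$ is interpreted by the terminal simplicial set and $+$ by disjoint union, $1+1$ is the constant simplicial set on a two-element set, which is what the statement calls a discrete simplicial set with $2$ elements, so $\Prop_\beta \simeq 1+1$ in the model.

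I expect no deep obstacle here: the one piece of genuine content is deriving propositional extensionality from univalence (which in turn rests on $\isProp A$ being a proposition), while everything else is routine manipulation of coproducts and propositions — with only the minor care that the impossible branches of the LEM witnesses must be discharged by $0$-elimination rather than by separately establishing $0 \not\simeq 1$.
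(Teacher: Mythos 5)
Your proof is correct and is essentially the paper's argument: the paper simply derives the statement ``internally from Corollary~\ref{cor:lem-in-universe}, by \citep[Ex.~3.9]{hott:book}'', and your proposal is a worked-out solution of that exercise (LEM plus propositional extensionality from univalence yields $\Prop_\beta \equiv 1+1$), together with the same closing observation about the interpretation of $1+1$.
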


\begin{proof}
  This follows internally from Corollary~\ref{cor:lem-in-universe}, by \citep[Ex.~3.9]{hott:book}.
\end{proof}

\subsection*{Acknowledgements}
We are grateful to Christian Sattler for catching an error in an earlier version of this paper.

\iftac

\else
\bibliographystyle{amsalphaurlmod}
\bibliography{general-bibliography}
\fi

\end{document}